 \renewcommand{\a}{\alpha}
\renewcommand{\b}{\beta}
\renewcommand{\d}{{\delta}}
\newcommand{\g}{\gamma}
\newcommand{\G}{\Gamma}
\renewcommand{\l}{\lambda}
\renewcommand{\t}{\tau}
\renewcommand{\(}{\left\(}
\renewcommand{\)}{\right\)}
\renewcommand{\[}{\left\[}
\renewcommand{\]}{\right\]}
 \theoremstyle{plain}
\newtheorem{theorem}{Theorem}[section]
\newtheorem{lemma}[theorem]{Lemma}
\newtheorem{corollary}[theorem]{Corollary}
\newtheorem{example}[]{Example}
\newtheorem{remark}[]{Remark}
\begin{document}

\title[Overpartitions and singular overpartitions]
{Overpartitions and singular overpartitions}

\author{Seunghyun Seo}
\address{Department of Mathematics Education, Kangwon National University,
Chuncheon, Kangwon-do 24341, Republic of Korea} \email{shyunseo@kangwon.ac.kr}

\author{Ae Ja Yee}
\address{Department of Mathematics, The Pennsylvania State University,
University Park, PA 16802, USA} \email{auy2@psu.edu}

\dedicatory{Dedicated to Krishnaswami Alladi for his 60th birthday}

\maketitle

\footnotetext[1]{The first author was partially supported by a research grant of Kangwon National University in 2015.} \vspace{0.5in}
\footnotetext[2]{The second author was partially supported by a grant ($\#$280903) from the Simons Foundation.} \vspace{0.5in}
\footnotetext[3]{2010 AMS Classification Numbers: Primary, 05A17; Secondary, 11P81.}

\noindent{\footnotesize{\bf Abstract.}  
Singular overpartitions, which were defined by George Andrews, are overpartitions whose Frobenius symbols have at most one overlined entry in each row.  In his paper, Andrews obtained interesting combinatorial results on singular overpartitions, one of which relates a certain type of  singular overpartitions  with a subclass of overpartitions.   In this paper, we provide a combinatorial proof of Andrews's result, which answers one of his open questions.}

\noindent{\footnotesize{\bf Keywords:} 
Partitions, Overpartitions, Frobenius symbols, Singular overpartitions, Dyson's map, Wright's map.
}

\section{Introduction}

 A Frobenius symbol for $n$ is a two-rowed array \cite{gea109, yee12}:
 \begin{equation*}
\left( \begin{matrix} a_1 & a_2 & \cdots & a_\d\\ 
b_1 & b_2 & \cdots & b_\d\end{matrix} \right)
\end{equation*}
where $\sum_{t=1}^{\d}(a_t +b_t+1)=n$,  $a_1>a_2>\cdots>a_\d\ge0$, and
$b_1>b_2>\cdots>b_\d\ge0$.  There is a natural one-to-one correspondence between the Frobenius symbols for $n$ and the  ordinary partitions of $n$ (see Section~\ref{f-symbol}).  Thus  a Frobenius symbol for $n$ is another representation of an ordinary partition of $n$. 

An overpartition of $n$ is a partition in which the first occurrence of a part may be overlined \cite{corteel}.  For an overparition, one can define  the corresponding Frobenius symbol by allowing overlined entries in a similar way.  It should be noted that the Frobenius symbol of an overpartition is defined in a different way in \cite{corteel, lovejoy}.  

Recently, George Andrews introduced a certain subclass of overpartitions, namely singular overpartitions which are Frobenius symbols with  at most one overlined entry in each row  \cite{gea303}.  
For integers $k,i$ with $k\ge 3$ and  $1\le i < k$,  Andrews found interesting combinatorial and arithmetic properties of $(k,i)$-singular overpartitions, which are singular overpartitions  with some restrictions subject to $k$ and $i$. Because of the complexity of the restrictions, we defer the exact definition to Section~\ref{singular}.  One of the main results of Andrews in \cite{gea303} is  the following. 
\begin{theorem}[Andrews, \cite{gea303}]\label{thm1}
The number of $(k,i)$-singular overpartitions of $n$ equals the number of overpartitions of $n$ in which no part is divisible by $k$ and only parts congruent to $\pm i$ mod $k$ may be overlined.
\end{theorem}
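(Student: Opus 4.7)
The plan is to establish the theorem by constructing an explicit, weight-preserving bijection; the mention of Dyson's map and Wright's map in the abstract strongly suggests that the bijection will be assembled from these classical building blocks rather than by a single monolithic construction.

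First, I would take a $(k,i)$-singular overpartition, represented by a Frobenius symbol with at most one overlined entry in each row, and peel off the (at most two) overlined entries $\overline{a}_s$ and $\overline{b}_t$. What remains is an ordinary Frobenius symbol, which corresponds to an ordinary partition $\pi$ via the standard correspondence recalled in Section~\ref{f-symbol}. So a $(k,i)$-singular overpartition of $n$ decomposes naturally into a triple consisting of an ordinary partition $\pi$ together with two distinguished pieces of data coming from the (possibly present) overlined entries, whose contributions sum to $n$.

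Next, I would convert the ordinary partition $\pi$ into an overpartition of the appropriate weight that has no parts divisible by $k$. The natural tool here is Wright's map, which gives a bijection between partitions into distinct parts and pairs of partitions; combined with a $k$-modular decomposition (writing each part as $kq+r$ with $0 \le r < k$), Wright's map can be used to systematically relocate any part divisible by $k$ into a decoration on a smaller non-multiple part. Meanwhile, Dyson's map, which shifts a partition by inserting or removing a part of a prescribed size while tracking the rank, provides the mechanism for reinserting the peeled overlined entries $\overline{a}_s$ and $\overline{b}_t$ as the overlined parts on the target side, with the residue constraints from the definition of $(k,i)$-singular overpartitions ensuring that the inserted parts land in exactly the residue classes $\pm i \pmod{k}$.

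The hardest step will be verifying that the position-and-residue constraints in the (rather intricate) definition of $(k,i)$-singular overpartitions recalled in Section~\ref{singular} translate cleanly to the congruence conditions in the target class, and in particular that no multiple of $k$ is ever produced. Once the map is set up, invertibility should reduce to inverting Wright's and Dyson's maps componentwise; however, the verification will likely require a case analysis based on how many of the two possible overlined entries are actually present, and on how the sizes of those entries compare with the parts of $\pi$ coming from the non-overlined Frobenius data — in particular, resolving the collisions that arise when an overlined entry happens to equal a part of $\pi$ is the point where the bijection is most delicate.
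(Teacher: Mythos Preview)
Your proposal has a genuine structural gap. The decomposition you describe---peel off the (at most two) overlined Frobenius entries, record them as ``data,'' then reinsert them via Dyson's map as the overlined parts of the target overpartition---cannot work as stated, because the two sides are not parametrized in parallel that way. A $(k,i)$-singular overpartition carries at most two overlined entries, whereas an overpartition counted by $\overline{C}_{k,i}(n)$ may have arbitrarily many overlined parts (any collection of distinct values in the residue classes $\pm i\pmod k$). There is thus no hope of a direct ``overlined entry $\leftrightarrow$ overlined part'' correspondence; you would have to explain how at most two bits of overlining data manufacture an unbounded number of overlined parts, and your sketch gives no mechanism for this. The assertion that ``the residue constraints \ldots\ ensure that the inserted parts land in exactly the residue classes $\pm i\pmod k$'' is also unjustified: the constraints in Section~\ref{singular} concern the differences $a_t-b_t$ relative to the thresholds $k-i-1$ and $-i+1$, and say nothing about the residues of the individual entries $a_s$, $b_t$ modulo $k$.

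The paper's route is quite different. The correct invariant on the singular-overpartition side is not the pair of overlined entries but the integer $m$ counting how many parity blocks are dotted (with a sign recording whether the last dotted block is positive or negative); on the overpartition side the matching invariant is $m=\ell(\mu^1)-\ell(\mu^2)$, the difference between the numbers of overlined parts in the two residue classes. Both sides are then put in bijection with ordinary partitions of $n-k\binom{m}{2}-im$. On the overpartition side this is exactly Wright's map applied to the pair $(\mu^1,\mu^2)$ of distinct-part partitions (not a device for ``relocating multiples of $k$''). On the singular side it is an \emph{iterated} application of Dyson's map---once per dotted block---interleaved with shift and shifted-conjugate operations acting on successive parity blocks of the Frobenius symbol, not on individual overlined entries. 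The $m$-fold iteration is precisely what produces the triangular weight shift $k\binom{m}{2}+im$ and what accounts for the unbounded number of overlined parts on the other side; a single insertion of one or two entries cannot do this.
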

Equivalently, 
\begin{equation*}
\sum_{n=0}^{\infty}\overline{Q}_{k,i}(n) q^n=  \prod_{n=0}^{\infty} \frac{(1+q^{nk+i})(1+q^{(n+1)k-i})}{(1-q^{nk+1})(1-q^{nk+2})\cdots (1-q^{nk+k-1})}, 
\end{equation*}
where $\overline{Q}_{k,i}(n)$ is the number of $(k,i)$-singular overpartitions of $n$. 

Andrews concluded his paper with four open questions.  The first question is to prove Theorem~\ref{thm1} bijectively. The primary purpose of this paper is to provide an answer to the first question.  His second question is indeed a special case of the first one.  Consequently, the second question will be settled as well.  In additon, we obtain a refined version of Theorem~\ref{thm1}, namely Theorem~\ref{thm4.1} in Section~\ref{sec4}. 

This paper is organized as follows. In Section~\ref{sec2}, necessary definitions and maps are reviewed. In Section~\ref{sec3}, an enumeration formula for subclasses of $(k,i)$-singular overpartitions is given (see Theorem~\ref{thm3.1}), and finally, a combinatorial proof of Theorem~\ref{thm1} will be presented in Section~\ref{sec4}. 

\section{Preliminaries}\label{sec2}

In this section, we provide some definitions and bijections that are needed in later sections.  

\subsection{Definitions} 
For a partition or overpartition $\l$, we write it as $\l=(\l_1, \l_2, \ldots)$ with $\l_1\ge \l_2\ge \cdots$.   We denote by $|\l|$ the sum of parts, and by $\ell(\l)$ the number of parts. 

The conjugate $\l'$ of a partition $\lambda$ is the partition resulting from the reflection of the Ferrers graph of $\l$ about the main diagonal. 

For a positive integer $k$, we also define $k\lambda$ as the partition whose parts are $k$ times  each part of $\lambda$.   For instance, let $\lambda=(3, 3, 2, 1)$. Then $5\lambda=(15, 15, 10, 5)$. 

Let $\lambda$ and $\mu$ be two partitions. Then we define the union $\lambda\cup \mu$ as the partition consisting of all the parts of $\lambda$ and $\mu$.  

We denote the number of partitions of $n$ by $p(n)$.  We also denote the partition with no parts by $\emptyset$.  For further standard definitions, see \cite{gea1}.

\subsection{Frobenius symbol}\label{f-symbol} Recall the definition of a Frobenius symbol for $n$ in Introduction.  For a partition $\lambda$ of $n$, let $\delta$ be the largest integer such that $\lambda_{\delta}-\delta\ge 0$, i.e., $\delta$ is the side of the Durfee square of $\lambda$. We now consider the following two-rowed array:
\begin{equation*}
\left( \begin{matrix} \l_1-1& \l_2-2 & \cdots & \l_{\delta} -\delta \\ \l'_1-1& \l'_2-2 & \cdots & \l'_{\delta} -\delta \end{matrix}\right).
\end{equation*}
Clearly, this satisfies the conditions for Frobenius symbols for $n$, and this is reversible. Thus there is a unique Frobenius symbol associated with $\lambda$.   For instance,
the associated Frobenius symbol of the partition $(7, 5, 5, 3, 2, 2, 1)$ is
\begin{equation*}
\left(\begin{matrix} 6 & 3& 2\\ 6 & 4 & 1\end{matrix} \right). 
\end{equation*}

\subsection{$(k,i)$-parity blocks and anchors}\label{block} 
Throughout this paper, we assume that $k$ and $i$ are integers such that  $k\ge 3$ and $1 \le i \le k-1$. 

For a partition $\l$, by abuse of the notation, we will denote its Frobenius symbol by $\l$. A column $\begin{matrix} a_t\\ b_t \end{matrix}$ of $\l$ is called $(k,i)$-positive if $a_t-b_t \ge k-i-1$ and called {$(k,i)$-negative} if $a_t-b_t \le -i+1$. If $-i+2 \le a_t-b_t \le k-i-2$, we call the column $(k,i)$-neutral. 

If two columns are both $(k,i)$-positive or both $(k,i)$-negative, we shall say that they have the same parity. 

We now divide $\l$ into $(k,i)$-{parity blocks}. 
These  are sets of contiguous columns maximally extended to the right, where all the entries have either the same parity or neutral.  

We shall say that a parity block is neutral if all columns are neutral. Owing to the maximality condition this can only occur if all the columns of $\l$ are neutral. In all other cases, we shall say that a block is positive (or negative) if it contains no negative (or positive, resp.) columns.
For instance, consider the following Frobenius symbol: 
\begin{equation*} 
\left( \begin{array}{cccccccccccccccc}
31 & 28 & 27 & 25 & 22 & 18 & 16 & 14 & 13 & 9  & 8 & 7 & 6 & 4 & 1 & 0 
\\ 
30 & 28 & 25 & 24 & 20 & 19 & 16 & 15 & 12 & 11 & 8 & 7 & 4 & 3 & 2 & 0
\end{array} \right). 
\end{equation*}
The $(3,1)$-parity blocks are
\begin{equation*}
\left( \begin{array}{c|c|ccc|ccc|c|ccc|cc|cc}
31 & 28 & 27 & 25 & 22 & 18 & 16 & 14 & 13 & 9  & 8 & 7 & 6 & 4 & 1 & 0 
\\ 
30 & 28 & 25 & 24 & 20 & 19 & 16 & 15 & 12 & 11 & 8 & 7 & 4 & 3 & 2 & 0 
\end{array} \right),
\end{equation*}
and 
 the $(5,2)$-parity blocks are
\begin{equation} \label{example1}
\left( \begin{array}{ccccc|ccccccc|cc|cc}
31 & 28 & 27 & 25 & 22 & 18 & 16 & 14 & 13 & 9  & 8 & 7 & 6 & 4 & 1 & 0 
\\ 
30 & 28 & 25 & 24 & 20 & 19 & 16 & 15 & 12 & 11 & 8 & 7 & 4 & 3 & 2 & 0 
\end{array} \right).
\end{equation}

For a non-neutral block, we now define its {anchor} as the first non-neutral column.

\subsection{$(k,i)$-singular overpartitions}\label{singular}

We are ready to define $(k,i)$-singular overpartitons. A Frobenius symbol is $(k,i)$-singular if it satisfies one of the following conditions:
\begin{itemize}
\item there are no overlined entries; 
\item if there is one overlined entry on the top row, then it occurs in the anchor of a positive block;
\item if there is one overlined entry on the bottom row, then it occurs in the anchor of a negative block;
\item if there are two overlined entries, then they occur in adjacent anchors with one on the top row of the positive block and the other on the bottom row of the negative block. 
\end{itemize} 
For the Frobenius symbol  in \eqref{example1}, the following are all the $(5,2)$-singular with exactly two overlined entries:
\begin{align*}
\left( \begin{array}{ccccc|ccccccc|cc|cc}
31 & 28 & \overline{27} & 25 & 22 & 18 & 16 & 14 & 13 & 9  & 8 & 7 & 6 & 4 & 1 & 0 
\\ 
30 & 28 & 25 & 24 & 20 & \overline{19} & 16 & 15 & 12 & 11 & 8 & 7 & 4 & 3 & 2 & 0 
\end{array} \right),\\
\left( \begin{array}{ccccc|ccccccc|cc|cc}
31 & 28 & {27} & 25 & 22 & 18 & 16 & 14 & 13 & 9  & 8 & 7 & \overline{6} & 4 & 1 & 0 
\\ 
30 & 28 & 25 & 24 & 20 & \overline{19} & 16 & 15 & 12 & 11 & 8 & 7 & 4 & 3 & 2 & 0 
\end{array} \right),\\
\left( \begin{array}{ccccc|ccccccc|cc|cc}
31 & 28 & {27} & 25 & 22 & 18 & 16 & 14 & 13 & 9  & 8 & 7 & \overline{6} & 4 & 1 & 0 
\\ 
30 & 28 & 25 & 24 & 20 &19 & 16 & 15 & 12 & 11 & 8 & 7 & 4 & 3 &  \overline{2} & 0 
\end{array} \right).
\end{align*}

\subsection{Dyson map} \label{dyson}
For a partition $\l$, the rank of $\l$ is the largest part minus the number of parts, i.e.,
$$rank(\l):= \l_1-\ell(\l).$$
We remark that the rank of $\l$ is equal to $a_1 -b_1$ if $\begin{matrix} a_1\\b_1\end{matrix}$ is the first column of the Frobenius symbol of $\lambda$. For convenience, we define $rank(\emptyset)=0$. 

Freeman Dyson defined a map to prove a symmetry in partitions \cite{dyson}.  Here, we use the description of the Dyson map in \cite{boulet, pak}.
For a partition $\l$ of $n$ with $rank(\l)\le r$, we subtract $1$ from each part of $\lambda$ and then add a part of size $r-1+ \ell(\l)$.  We call this the Dyson map and denote it by $d_r$.  
\begin{remark}\label{remark1}
It can be easily checked that $d_r(\l)$ is a partition of $n+r-1$ with  $rank(d_r(\l))  \ge r-2$.
\end{remark}

We now describe the Dyson map with Frobenius symbol for later use.  Consider 
\begin{equation*}
\l=\left( \begin{matrix} a_1 & a_2 & \cdots & a_\d\\ b_1 & b_2 & \cdots & b_\d\end{matrix} \right),
\end{equation*}
with $rank(\l) \le r$.
When $\d\ge2$, $d_r(\l)$ is given by
\begin{itemize}
\item $\left( \begin{matrix} 
b_1+r-1 & a_1-2 & \cdots & a_{\d-2}-2 & a_{\d-1}-2 & a_{\d}-2\\ 
b_2+2 & b_3+2 & \cdots & b_\d+2 & 1 & 0
\end{matrix} \right)$\quad if $a_\d \ge 2$,
\item $\left( \begin{matrix} 
b_1+r-1 & a_1-2 & \cdots & a_{\d-2}-2 & a_{\d-1}-2 \\ 
b_2+2 & b_3+2 & \cdots & b_\d+2 & 1 
\end{matrix} \right)$\quad if $a_\d =1$,
\item $\left( \begin{matrix} 
b_1+r-1 & a_1-2 & \cdots & a_{\d-2}-2 \\ 
b_2+2 & b_3+2 & \cdots & b_\d+2 
\end{matrix} \right)$\quad if $a_\d=0$, $a_{\d-1}=1$,
\item $\left( \begin{matrix} 
b_1+r-1 & a_1-2 & \cdots & a_{\d-2}-2 & a_{\d-1}-2 \\ 
b_2+2 & b_3+2 & \cdots & b_\d+2 & 0
\end{matrix} \right)$\quad if $a_\d=0$, $a_{\d-1}\ge 2$.
\end{itemize}
When $\d=1$, $d_r(\l)$ is given by 
\begin{itemize}
\item $\left( \begin{matrix} 
b_1+r-1 & a_1-2\\ 
1 & 0
\end{matrix} \right)$\quad if $a_1 \ge 2$,
\item $\left( \begin{matrix} 
b_1+r-1 \\ 
1
\end{matrix} \right)$\quad if $a_1=1$,
\item $\left( \begin{matrix} 
b_1+r-1 \\ 
0
\end{matrix} \right)$\quad if $a_1=0$ and $b_1 \ge 1-r$,

\item 
\quad $\emptyset$ \quad if $a_1=0$ and $b_1 = -r$.
\end{itemize}
Note that the upper left entry of $d_r(\l)$ is $b_1 +r-1$ if $d_r(\l)$ is nonempty.

\subsection{Shift and Shifted Conjugate }
Given an integer $u$, a shift map $s_u$ is defined as follows:
\begin{equation*}
\left( \begin{matrix} a_1 & a_2 & \cdots & a_\d\\ b_1 & b_2 & \cdots & b_\d\end{matrix} \right)
\stackrel{s_u}{\longrightarrow}
\left( \begin{matrix} 
a_1-u & a_2-u & \cdots & a_\d-u\\ 
b_1+u & b_2+u & \cdots & b_\d+u
\end{matrix} \right).
\end{equation*}
Similarly, a shifted conjugate map $c_u$ is defined as follows:
\begin{equation*}
\left( \begin{matrix} a_1 & a_2 & \cdots & a_\d\\ b_1 & b_2 & \cdots & b_\d\end{matrix} \right)
\stackrel{c_u}{\longrightarrow}
\left( \begin{matrix} 
b_1-u & b_2-u & \cdots & b_\d-u\\ 
a_1+u & a_2+u & \cdots & a_\d+u
\end{matrix} \right).
\end{equation*}

\begin{remark} \label{remark2}
It follows from the definitions that $s_{u}^{-1}=s_{-u}$ and $c_{u}^{-1}=c_{u}$.  Also, for the Frobenius symbol of the empty partition $\emptyset$,  we define $s_u( \emptyset )=c_u(\emptyset)=\emptyset$. 
\end{remark}

\subsection{Wright map}\label{wright}
The Wright map is a bijection between pairs of partitions into distinct parts and pairs of an ordinary partition and a triangular number (see \cite{pak, wright, yee41}).  However, in this paper, we give a modified version of the map  using Frobenius symbol for our purpose.  We denote the map by $\varphi$. 

Let $\mu^1$ be a partition into distinct parts congruent to $i$ mod $k$  and $\mu^2$ be a partition into distinct parts congruent to $(k-i)$ mod $k$, namely
\begin{align*}
\mu^1&=(ka_1+i, ka_2+i, \ldots, ka_{s+m}+i), \\
\mu^2&=(kb_1+(k-i), kb_2+(k-i),\ldots, kb_s+(k-i)),
\end{align*}
where $a_1>a_2>\cdots>a_{m+s}\ge 0$ and $b_1>b_2>\cdots>b_m\ge 0$. 

Suppose that $m\ge 0$.  We consider the following Frobenius symbol
\begin{align*}
\mu=\left( \begin{matrix}
a_{1+m} & a_{2+m} & \ldots & a_{s+m} \\ b_1 & b_2 & \ldots & b_s
\end{matrix}\right).
\end{align*}
We now take $\nu=(a_1-m+1, a_2-m+2, \ldots, a_m)$. Then since $a_1>a_2>\cdots$, it is clear that $\nu$ is a partition.   We define $\varphi(\mu^1, \mu^2)=(k(\nu\cup \mu),m) $.

For example, let $k=5$ and $i=2$. If $\mu^1=(37, 27, 22, 7)$ and $\mu^2=(18, 13)$, then $m=2$, and we obtain  
\begin{equation*}
\mu=\left(\begin{matrix}  4 & 1 \\ 3 & 2 \end{matrix} \right)=(5, 3, 2, 2),\quad \nu=(6, 5). 
\end{equation*}
Thus
\begin{equation*}
\varphi(\mu^1, \mu^2)=(5(6, 5, 5, 3, 2, 2), 2)=((30, 25, 25, 15, 10, 10), 2).
\end{equation*}

Similarly, if $m<0$, 
we consider the following Frobenius symbol
\begin{align*}
\mu=\left( \begin{matrix}
b_{1-m} & b_{2-m} & \ldots & b_s\\
a_{1} & a_{2} & \ldots & a_{s+m} 
\end{matrix}\right).
\end{align*}
We now take $\nu=(b_1+m+1, b_2+m+2, \ldots, b_{-m})$. Then since $b_1>b_2>\cdots$, it is clear that $\nu$ is a partition.   We define $\varphi(\mu^1, \mu^2)=(k(\nu\cup \mu)',m)$.

We can easily check that $|\mu^1|+|\mu^2|=k(|\nu|+|\mu|)+k\binom{m}{2}+i m$, and we omit the details.

\begin{remark} \label{remark3}
 We note that the Wright map proves that the number of such pairs of partitions $\mu^1, \mu^2$ with $|\mu^1|+|\mu^2|=n$ and $\ell(\mu^1)-\ell(\mu^2)=m$ is 
\begin{equation*}
p\bigg(\frac{n-k \binom{m}{2} - i m}{k} \bigg).
\end{equation*}
\end{remark}

\section{Singular overpartitions and dotted blocks}\label{sec3}

\subsection{Dotted parity blocks} We now introduce another representation of a $(k,i)$-singular overpartition.  We will use this representation throughout this paper. 

Let $\l$ be a $(k,i)$-singular overpartition.  We first separate all the columns before the first anchor to form a block. By the definition of parity blocks, we see that these columns must be all neutral if exist. We denote this block  by $E$.  For the blocks of the remaining columns, we denote each of them by $P$ and $N$ if its anchor is positive and negative, respectively.   

If there is exactly one overlined entry in $\l$, we put a dot on the top of each of the blocks between  the first non-neutral block and the block of the overlined entry.   If there are two overlined entries in $\l$, then we put a dot on the top of each block between the second non-neutral block and the block of the last overlined entry.   

It is clear that a Frobenius symbol $\l$ is $(k,i)$-singular if 
\begin{enumerate}
\item[S1.] there are no dotted blocks, or
\item[S2.] there are consecutive dotted  blocks starting from the first non-neutral block, or
\item[S3.] there are consecutive dotted  blocks starting from the second non-neutral block.
\end{enumerate}

For instance, if a sequence of  parity blocks is $EPNPN$, then the following are all singular:
\begin{enumerate}
\item[] $EPNPN$,
\item[] $E\dot{P}NPN$, $E\dot{P}\dot{N}PN$, $E\dot{P}\dot{N}\dot{P}N$, $E\dot{P}\dot{N}\dot{P}\dot{N}$, 
\item[] $E{P}\dot{N}PN$, $E{P}\dot{N}\dot{P}N$, $E{P}\dot{N}\dot{P}\dot{N}$.
\end{enumerate}

Since there is a one-to-one correspondence between  $(k,i)$-singular overpartitions and Frobenius symbols with a sequence of parity blocks satisfying S1, S2, or S3,  we will use the latter form from now on.  

\subsection{$(k,i)$-singular overpartitions with $m$ dotted blocks}

The following theorem is one of our main results.
\begin{theorem} \label{thm3.1}
Let $m$ be a positive integer.  
\begin{enumerate}
\item The number of $(k,i)$-singular overpartitions of $n$ with exactly $m$ dotted blocks and the last dotted block negative is 
\begin{equation*}
p\left(n-k\binom{m}{2}-im \right).
\end{equation*}
\item The number of $(k,i)$-singular overpartitions of $n$ with exactly $m$ dotted blocks and the last dotted block positive is 
\begin{equation*}
p\left(n-k\binom{m+1}{2}+i m \right).
\end{equation*}
\end{enumerate}
\end{theorem}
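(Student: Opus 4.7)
The plan is to prove Theorem~\ref{thm3.1} by constructing, by induction on $m$, an explicit weight-preserving bijection between $(k,i)$-singular overpartitions of $n$ with exactly $m$ dotted blocks (last block of prescribed parity) and ordinary partitions of size $n - k\binom{m}{2} - im$ or $n - k\binom{m+1}{2} + im$. The bijection is assembled from the three maps introduced in Section~\ref{sec2}: the inverse Dyson map $d_r^{-1}$, the shift $s_u$ and shifted conjugate $c_u$, and the Wright map $\varphi$, which is applied at the very end to collapse pairs of distinct-part partitions into a single ordinary partition.

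The inductive step peels off the rightmost dotted block. Its anchor $(A,B)$ satisfies $A - B \le -i + 1$ in the negative case and $A - B \ge k - i - 1$ in the positive case, so the column weight $A + B + 1$ is congruent to $i$ or $k-i \pmod k$ respectively. I would show that a suitable composition of $d_r^{-1}$ with $s_u$ (and $c_u$ in the positive case) removes this anchor together with its overline marker, while returning a Frobenius symbol whose parity block decomposition now has one fewer dotted block and whose last dotted block has the opposite parity. The sub-cases S2 and S3 of Section~\ref{singular} (one vs.\ two overlined entries) interleave under this peeling operation: after one step, S3 turns into S2 (or vice versa) with $m-1$ dotted blocks, so the induction treats them uniformly.

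After $m$ peelings, the residual is an ordinary Frobenius symbol $\pi$ together with two partitions $\mu^1, \mu^2$ into distinct parts $\equiv i$ and $\equiv k-i \pmod k$ respectively, with $\ell(\mu^1) - \ell(\mu^2) = m$ in the negative case and $-m$ in the positive case. These two partitions collect the $m$ extracted anchor weights. Finally, applying the Wright map $\varphi$ to $(\mu^1,\mu^2)$ and merging its $k$-multiple partition output with $\pi$ yields an ordinary partition. The weight bookkeeping, using the identity in Remark~\ref{remark3}, produces the target sizes $n - k\binom{m}{2} - im$ and $n - k\binom{m+1}{2} + im$, hence the counts $p(\cdot)$ claimed by the theorem.

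The main obstacle is verifying that each peeling step preserves the parity block structure of the residual Frobenius symbol: after applying $d_r^{-1}, s_u, c_u$ all parity and anchor conditions must continue to hold, and the leftmost new column must either be absorbed into the existing neutral block $E$ or leave the non-neutral structure intact. Calibrating the parameters of these maps so that the peeling inverts cleanly, and so that the arithmetic of $k\binom{m}{2}+im$ and $k\binom{m+1}{2}-im$ emerges exactly, is the technical heart of the argument.
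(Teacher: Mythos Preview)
Your key claim---that the anchor column of a negative (resp.\ positive) block has weight $A+B+1 \equiv i$ (resp.\ $k-i$) $\pmod k$---is false. The anchor conditions $A-B \le -i+1$ and $A-B \ge k-i-1$ are \emph{inequalities} on the difference $A-B$; they say nothing about $A+B$ modulo $k$. For instance, with $k=5$, $i=2$, the column with $A=1$, $B=2$ is a valid negative anchor ($A-B=-1\le -1$), yet $A+B+1=4\not\equiv 2\pmod 5$. Hence the extracted anchor weights do not land in the residue classes $i,\,k-i\pmod k$, the partitions $\mu^1,\mu^2$ you propose cannot be formed, and $\varphi$ cannot be applied as described. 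There is a second obstruction: adjacent dotted blocks alternate in parity, so $m$ peelings would yield roughly $m/2$ anchors of each sign, not a difference $\ell(\mu^1)-\ell(\mu^2)=m$.

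The paper's argument is architecturally different and does not use the Wright map for Theorem~\ref{thm3.1} at all. It applies the \emph{forward} Dyson map $d_r$ to a growing Frobenius symbol $\Gamma_v$, at each step concatenating the next block $D_{v+1}$ (after a shift $s$ or shifted conjugate $c$) onto $d_r(\Gamma_v)$. Two technical lemmas (Lemmas~\ref{lem:cd} and~\ref{lem:sd}) guarantee that the concatenation is again a Frobenius symbol with the needed rank bound, and that the weight lost at step $v$ is exactly $i+(v-1)k$; summing over $v=1,\ldots,m$ gives $im+k\binom{m}{2}$ directly, producing a single ordinary partition of the target size with no auxiliary pair $(\mu^1,\mu^2)$. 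The Wright map enters only in Section~\ref{sec4}, on the \emph{other} side of the identity, to relate ordinary partitions of that size to the overpartitions counted by $\overline{C}_{k,i}(n,m)$.
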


Note that singular overpartitions with no dotted blocks are just ordinary partitions, which with Theorem~\ref{thm3.1} yields the following corollary. 

\begin{corollary} 
The number of  $(k,i)$-singular overpartitions of $n$  is 
\begin{equation*}
\sum_{m=-\infty}^{\infty} p\left(n-k\binom{m}{2}-im \right).
\end{equation*}
\end{corollary}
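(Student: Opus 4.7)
The plan is to build, in each case, a bijection between $(k,i)$-singular overpartitions of $n$ with exactly $m$ dotted blocks and ordinary partitions of the stated target integer; this bijection couples an iterated anchor-extraction procedure on the Frobenius symbol (built from the Dyson map $d_r$ of Section~\ref{dyson} together with the shift $s_u$ and shifted conjugate $c_u$) with the Wright map $\varphi$ of Section~\ref{wright}. The motivation is Remark~\ref{remark3}: pairs of distinct-parts partitions $(\mu^1,\mu^2)$ with parts $\equiv i$ and $\equiv k-i \pmod{k}$ satisfying $|\mu^1|+|\mu^2|=N$ and $\ell(\mu^1)-\ell(\mu^2)=m$ are already counted by $p((N-k\binom{m}{2}-im)/k)$, which has exactly the shape of the target formula. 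Hence the task is to reduce a singular overpartition with $m$ dotted blocks to such a pair together with a residual ordinary partition whose parts are coprime to $k$.

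First I would describe the single peeling step. Given a singular overpartition $\l$ whose first dotted block has anchor column with entries $a_t$ (top) and $b_t$ (bottom), I would apply an appropriate shift $s_u$ (and $c_u$ in the negative case) so that this anchor sits at the leftmost position of a Frobenius symbol with the right rank, then use the Dyson map to pull it off as a single large part $\pi$; by the positivity/negativity thresholds defining anchors, $\pi$ is automatically in the correct residue class modulo $k$ and distinct from the parts produced by later iterations. The block immediately following the peeled anchor inherits the role of first dotted block, so the procedure iterates. After $m$ peels we obtain a pair $(\mu^1,\mu^2)$ of distinct-parts partitions with parts $\equiv i$ and $\equiv k-i \pmod{k}$, whose signed length difference $\ell(\mu^1)-\ell(\mu^2)$ has magnitude $m$ and whose sign is governed by which dotted anchor comes last (and hence by which case of the theorem we are in), together with a residual Frobenius symbol $\sigma$ carrying no dotted blocks and, I will argue, representing an ordinary partition with no parts divisible by $k$.

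Finally, I would apply $\varphi$ to $(\mu^1,\mu^2)$, obtaining a partition $k\tau$ all of whose parts are divisible by $k$, and form $\sigma\cup k\tau$. The size identity $|\mu^1|+|\mu^2|=k|\tau|+k\binom{m}{2}+im$ from the end of Section~\ref{wright}, applied with the appropriate sign convention (in case~(2) this is the $m\to -m$ variant of the Wright map, using $-k\binom{-m}{2}-i(-m)=-k\binom{m+1}{2}+im$), yields $|\sigma|+k|\tau|=n-k\binom{m}{2}-im$ in case~(1) and $|\sigma|+k|\tau|=n-k\binom{m+1}{2}+im$ in case~(2). Because the $k$-coprime parts of an ordinary partition are recovered uniquely as $\sigma$ and the $k$-divisible parts as $k\tau$, the map $(\sigma,k\tau)\mapsto\sigma\cup k\tau$ inverts via the canonical $k$-divisibility splitting, so the composition is a bijection onto all ordinary partitions of the target.

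The main obstacle will be the peeling step at the Frobenius-symbol level. One must verify that the Dyson rank hypothesis $rank(\l)\le r$ holds at every iteration; that the chosen shift parameters bring each successive anchor into the correct upper-left position without disturbing the parity classification of the surviving non-peeled columns; and that the residual $\sigma$ has no parts divisible by $k$, so that the final $k$-divisibility splitting is unambiguous. These verifications hinge on the defining inequalities of positive, negative, and neutral columns from Section~\ref{block}, combined with the precise shifts dictated by each anchor's residue, and they form the technical core of the argument.
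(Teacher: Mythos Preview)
Your proposal aims far past what the Corollary requires. In the paper, the Corollary is a one-line consequence of Theorem~\ref{thm3.1}: the $m=0$ term is the count of ordinary partitions (no dotted blocks), the $m>0$ terms are supplied by part~(1) of Theorem~\ref{thm3.1}, and the $m<0$ terms come from part~(2) after the reindexing $k\binom{-m}{2}+i(-m)=k\binom{m+1}{2}-im$. What you outline instead is essentially a direct bijection toward Theorem~\ref{thm4.1}, bypassing the paper's $\psi_m$.

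There is a genuine gap in your plan, and it lies exactly where you flag the ``main obstacle''. The anchor of a block is determined by the \emph{inequalities} $a_t-b_t\ge k-i-1$ (positive) or $a_t-b_t\le 1-i$ (negative), not by any congruence condition. When you peel such a column with a Dyson step, the size of the part you produce depends on $a_t,b_t$ and the chosen shift, and nothing forces it into the residue class $i$ or $k-i$ modulo $k$; your assertion that ``$\pi$ is automatically in the correct residue class modulo~$k$'' is simply not a consequence of the positivity/negativity thresholds. For the same reason, the residual Frobenius symbol $\sigma$ has no mechanism preventing it from having parts divisible by $k$: the parity-block structure constrains column differences $a_y-b_y$, not the sizes of parts of the associated partition. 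Consequently the final ``canonical $k$-divisibility splitting'' of $\sigma\cup k\tau$ does not recover $(\sigma,k\tau)$, and your composite map is not a bijection.

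The paper's route avoids this entirely: $\psi_m$ never extracts parts or appeals to residues---it iteratively \emph{merges} the blocks $D_{m+1},\ldots,D_1$ into a single Frobenius symbol via Lemmas~\ref{lem:cd} and~\ref{lem:sd}, landing on an arbitrary partition of the target size; the Wright map enters only later, in the separate proof of Theorem~\ref{thm4.1}, on the overpartition side.
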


\proof
For $m<0$,  
the number of $(k,i)$-singular overpartitions of $n$ with exactly $|m|$ dotted blocks and the last block positive is
\begin{equation*}
p\left(n-k\binom{m}{2}-im \right),
\end{equation*}
which completes the proof.
\endproof

To prove Theorem~\ref{thm3.1},  we will construct a bijection in Section~\ref{psi}. However, we first need the following two lemmas.  

\begin{lemma} \label{lem:cd} Given integers $f, g, h$ with $g\ge 1$, $2g\ge f+1$, $h\ge f$,
consider two Frobenius symbols
\begin{equation*} 
L=\left( \begin{matrix} a_1 &  \cdots & a_{t-1} \\ 
b_1 & \cdots &b_{t-1} \end{matrix} \right)\quad\mbox{and}\quad
R=\left( \begin{matrix} \a_1 &  \cdots & \a_{\t}  \\ 
\b_1 & \cdots & \b_{\t}\end{matrix} \right)\neq \emptyset,
\end{equation*}
such that \begin{itemize}
\item[i)] $a_y - b_y \ge f$ for all $1\le y \le t-1$,
\item[ii)] $\a_1 - \b_1 \le f-2g+1$,
\item[iii)] $a_{t-1}> \a_1+g-1$,
\item[iv)] $b_{t-1}> \b_1-g+1 \ge 0$.
\end{itemize}
Let
\begin{equation*} 
\mu=
\left( \begin{matrix} \mu_{11} &  \mu_{12} & \cdots & \mu_{1{\d}'}\\ 
\mu_{21} &  \mu_{22} & \cdots & \mu_{2{\d}'}\end{matrix} \right)
:=
c_{g-f+1}(L)\,d_{f-2g+1}(R),
\end{equation*}
where the first $t-1$ columns of $\mu$ are from $c_{g-f+1}(L)$ and the rest are from $d_{f-2g+1}(R)$.  
Then the following are true.
\begin{enumerate}
\item $\mu$ is a Frobenius symbol. \label{lem1:wd}
\item  $\mu_{1y} - \mu_{2y} \le f-2g-2$ for all $1\le y \le t-1$ and $\mu_{1t} - \mu_{2t} \ge f-2g-1$ if $\mu_{1t}$ and $\mu_{2t}$ exist. \label{lem1:rk}
\item 
$rank(\mu)\le -h+2f -2g-2$ if  $L\neq \emptyset$ and $rank(L)\ge h$. \label{lem1:rank}
\item The correspondence from $(L,R)$ to $\mu$ is reversible. \label{lem1:re}
\item $|L|+|R|-|\mu|=2g-f$. \label{lem1:wt}
\end{enumerate}
\end{lemma}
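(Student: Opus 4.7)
The plan is to verify the five assertions in turn by unfolding the explicit formulas for the shifted conjugate $c_u$ (Section~2.6) and the Dyson map $d_r$ (Section~\ref{dyson}), applied with $u=g-f+1$ and $r=f-2g+1$. The $y$-th column of $c_{g-f+1}(L)$ reads $\binom{b_y-g+f-1}{a_y+g-f+1}$; hypothesis (ii) is precisely the condition $rank(R)\le r$ that legitimises applying $d_r$ to $R$, and Remark~\ref{remark1} then guarantees that $d_{f-2g+1}(R)$ has upper-left entry $\b_1+f-2g$ and column differences everywhere at least $f-2g-1$.

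Parts (2), (3), and (5) fall out by direct computation. In (2) the column difference in $c_{g-f+1}(L)$ equals $-(a_y-b_y)+2f-2g-2\le f-2g-2$ by (i), while the bound on $\mu_{1t}-\mu_{2t}$ is Remark~\ref{remark1}. Part (3) is the one-line calculation $rank(\mu)=\mu_{11}-\mu_{21}=-rank(L)+2f-2g-2$. Part (5) combines $|c_u(L)|=|L|$, immediate from the definition of $c_u$, with $|d_r(R)|=|R|+r-1$ from Remark~\ref{remark1}.

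For part (1), both constituent blocks are already Frobenius symbols, so everything reduces to strict decrease at the junction between columns $t-1$ and $t$. On the top row this becomes $b_{t-1}-g+f-1>\b_1+f-2g$, which is exactly (iv). On the bottom row I would case-split on which branch of the Dyson formula applies to $R$. In the branch $\t\ge 2$ the first bottom entry of $d_r(R)$ is $\b_2+2$, and (ii) combined with $\b_2<\b_1$ gives $\b_2-g+f+1\le\a_1+g-1$, which is strictly less than $a_{t-1}$ by (iii). In the branches $\t=1$ the first bottom entry of $d_r(R)$ is $0$ or $1$, and the inequality $a_{t-1}+g-f+1\ge 2$ follows from $a_{t-1}\ge g$ (itself a consequence of (iii) and $\a_1\ge 0$) together with $2g\ge f+1$.

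Part (4) is the main substantive step. By~(2), the cut index $t$ is uniquely recovered from $\mu$ as the smallest column index whose difference is at least $f-2g-1$. Applying $c_{g-f+1}$ to the left portion recovers $L$ via involutivity (Remark~\ref{remark2}), and applying the inverse Dyson map to the right portion recovers $R$, since $d_r$ restricts to a bijection between Frobenius symbols of rank at most $r$ and those of rank at least $r-2$. The main obstacle here will be the degenerate case where $d_{f-2g+1}(R)=\emptyset$, so that $\mu$ consists solely of the image of $L$ and no cut column $t$ is visible; one must argue that the parameters $f,g$ already force $R$ in this case to be the unique Frobenius symbol $\binom{0}{2g-f-1}$, and then confirm that the reconstructed pair $(L,R)$ satisfies all of (i)--(iv).
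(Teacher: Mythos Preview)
Your bottom-row junction argument in part~(1) for $\tau\ge 2$ does not go through. You claim that ``(ii) combined with $\b_2<\b_1$ gives $\b_2-g+f+1\le\a_1+g-1$,'' but (ii) supplies only an \emph{upper} bound on $\a_1$, namely $\a_1\le\b_1+f-2g+1$; together with $\b_2\le\b_1-1$ this yields no comparison between $\b_2$ and $\a_1$ in the direction you need. Concretely, take $f=0$, $g=1$ and $R=\left(\begin{smallmatrix}1&0\\100&50\end{smallmatrix}\right)$: then (ii) reads $-99\le-1$, yet $\b_2-g+f+1=50$ while $\a_1+g-1=1$. The paper instead routes the bottom-row inequality through (i) and (iv): from $a_{t-1}-b_{t-1}\ge f$ one has $a_{t-1}+g-f+1\ge b_{t-1}+g+1$, and then (iv) gives $b_{t-1}+g+1>\b_1+2>\g$ for every possible value $\g\in\{0,1,\b_2+2\}$ of the first bottom entry of $d_{f-2g+1}(R)$.

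There is also a secondary gap: the assertion that $c_{g-f+1}(L)$ is ``already a Frobenius symbol'' presupposes nonnegativity of its entries, which is not automatic from the definition of $c_u$. When $d_{f-2g+1}(R)\neq\emptyset$ this follows a posteriori from the junction inequalities (the last column of $c_{g-f+1}(L)$ strictly dominates a column with nonnegative entries), but the degenerate case $d_{f-2g+1}(R)=\emptyset$ must be handled in part~(1) itself, not deferred to part~(4). In that case $R=\left(\begin{smallmatrix}0\\2g-f-1\end{smallmatrix}\right)$, and one checks directly from (iii), (iv) and $2g\ge f+1$ that $b_{t-1}-g+f-1\ge 0$ and $a_{t-1}+g-f+1\ge 2$. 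Your handling of parts~(2), (3) and~(5) is correct and matches the paper.
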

\begin{proof}
First note that since $rank(R)=\a_1-\b_1\le f-2g+1$, $d_{f-2g+1}(R)$ is well defined.  We now prove each of the five statements. 

 \eqref{lem1:wd}
If $L=\emptyset$, then $\mu=d_{f-2g+1}(R)$ is obviously
a Frobenius symbol because the Dyson map is defined on partitions.
Now assume $L\ne\emptyset$.
Then the last column of  $c_{g-f+1}(L)$ is  
$\begin{matrix} b_{t-1}-g+f-1\\ 
a_{t-1}+g-f+1 \end{matrix}$.  
\begin{itemize}
\item
If $d_{f-2g+1}(R)\ne \emptyset$, then its first column is
$\begin{matrix} \b_1+f-2g\\ \g \end{matrix}$, 
where $\g$ is $0$, $1$, or $\b_{2}+2$. (See Section~\ref{dyson}).
Since $b_{t-1}>\beta_1-g+1$, we have 
\begin{align*}
&b_{t-1}-g+f-1 > \b_1+f-2g,
\end{align*}
from which with $a_{t-1}-b_{t-1}\ge f$,  it follows that
\begin{equation*}
a_{t-1}+g-f+1 \ge b_{t-1}+g+1 > \b_1+2 > \g.
\end{equation*}
Thus $\mu=c_{g-f+1}(L)\,d_{f-2g+1}(R)$ is a Frobenius symbol.

\item
If $d_{f-2g+1}(R)=\emptyset$,  
then $R=\left(\begin{matrix} 0\\ 2g-f-1 \end{matrix}\right)=
\left(\begin{matrix} \a_1\\ \b_1 \end{matrix}\right)$. Since $b_{t-1}>\beta_1-g+1$,
\begin{equation*}
b_{t-1}-g+f-1 \ge \b_1 - 2g +f +1 = 0. 
\end{equation*}
Also, since $a_{t-1}>\alpha_1+g-1$ and $2g\ge f+1$, 
\begin{equation*}
a_{t-1}+g-f+1 \ge \a_{1}+2g-f +1 \ge 2. 
\end{equation*}
Thus $\mu=c_{g-f+1}(L)$ is a Frobenius symbol.
\end{itemize}

(2)
For $1\le y \le t-1$, since $a_y-b_y\ge f$, 
\begin{equation*}
\mu_{1y}-\mu_{2y}=(b_y-a_y)-2g+2f-2\le -f-2g+2f-2.
\end{equation*}

Also, by Remark~\ref{remark1}, 
\begin{equation*}
\mu_{1t}-\mu_{2t}=rank(d_{f-2g+1}(R))\ge (f-2g+1)-2.
\end{equation*}

(3) If $rank(L)\ge h$ then $a_1-b_1 \ge h$. Thus
$$
rank(\mu)=\mu_{11}-\mu_{21}=(b_1-a_1)-2g+2f-2\le -h-2g+2f-2.
$$

(4) Consider a Frobenius symbol $\mu=\left( \begin{matrix} \mu_{11} &  \mu_{12} & \cdots & \mu_{1{\d}'}\\ 
\mu_{21} &  \mu_{22} & \cdots & \mu_{2{\d}'}\end{matrix} \right)$
satisfying \eqref{lem1:rk}. 
Set 
$$
L'=\left( \begin{matrix} \mu_{11} & \cdots & \mu_{1(t-1)}\\ 
\mu_{21} & \cdots & \mu_{2(t-1)}\end{matrix} \right) 
\quad\mbox{and}\quad
R'=\left( \begin{matrix} \mu_{1t}  & \cdots & \mu_{2{\d}'}\\ 
\mu_{2t} & \cdots & \mu_{2{\d}'}\end{matrix} \right). 
$$

Then $L=c_{g-f+1}(L')$ and $R=d_{f-2g+1}^{-1}(R')$ are desired Frobenius symbols, 
so \eqref{lem1:re} holds. 

(5) Finally, since $|c_{g-f+1}(L)|=|L|$ and $|d_{f-2g+1}(R)|=|R|-(2g-f)$, \eqref{lem1:wt}  holds true.
\end{proof}

\begin{lemma} \label{lem:sd} Given integers $f, g, h$ with $g\ge 1$, $2g\ge f+1$, $h\le f$,
consider two Frobenius symbols
\begin{equation*} 
L=\left( \begin{matrix} a_1 &  \cdots & a_{t-1} \\ 
b_1 & \cdots &b_{t-1} \end{matrix} \right)\quad\mbox{and}\quad
R=\left( \begin{matrix} \a_1 &  \cdots & \a_{\t}  \\ 
\b_1 & \cdots & \b_{\t}\end{matrix} \right) \neq \emptyset,
\end{equation*}
such that \begin{itemize}
\item[i)] $a_y - b_y \le f$ for all $1\le y \le t-1$,
\item[ii)] $\a_1 - \b_1 \le f-2g+1$,
\item[iii)] $a_{t-1}> \b_1-g+f+1$,
\item[iv)] $b_{t-1}> \a_1+g-f-1 \ge 0$.
\end{itemize}
Let
\begin{equation*} 
\mu=
\left( \begin{matrix} \mu_{11} &  \mu_{12} & \cdots & \mu_{1{\d}'}\\ 
\mu_{21} &  \mu_{22} & \cdots & \mu_{2{\d}'}\end{matrix} \right)
:=
s_{g+1}(L)\,d_{f-2g+1}(R),
\end{equation*}
where the first $t-1$ columns of $\mu$ are from $s_{g+1}(L)$ and the rest are from $d_{f-2g+1}(R)$.  
Then the following are true.
\begin{enumerate}
\item $\mu$ is a Frobenius symbol. \label{lem2:wd}
\item  $\mu_{1y} - \mu_{2y} \le f-2g-2$ for all $1\le y \le t-1$ and $\mu_{1t} - \mu_{2t} \ge f-2g-1$ if $\mu_{1t}$ and $\mu_{2t}$ exist. \label{lem2:rk}
\item $rank(\mu)\le h-2g-2$ if $L\ne \emptyset$  and $rank(L) \le h$. \label{lem2:rank}
\item The correspondence from $(L,R)$ to $\mu$ is reversible. \label{lem2:re}
\item $|L|+|R|-|\mu|=2g-f$. \label{lem2:wt}
\end{enumerate}
\end{lemma}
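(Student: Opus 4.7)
My plan is to mirror the five-part proof of Lemma~\ref{lem:cd}, with the shift map $s_{g+1}$ playing the role of the shifted conjugate $c_{g-f+1}$. Since the Dyson-image piece $d_{f-2g+1}(R)$ is defined exactly as before---the hypothesis $\operatorname{rank}(R) = \alpha_1 - \beta_1 \le f - 2g + 1$ is identical to hypothesis (ii) of Lemma~\ref{lem:cd}---parts (2), (3), and (5) become essentially mechanical once $s_{g+1}$ is substituted. The map $s_{g+1}$ preserves each column's weight $a_y + b_y + 1$ but decreases every column's rank by exactly $2g + 2$, so $\mu_{1y} - \mu_{2y} = (a_y - b_y) - 2g - 2 \le f - 2g - 2$ for $y \le t-1$, while $\mu_{1t} - \mu_{2t} = \operatorname{rank}(d_{f-2g+1}(R)) \ge f - 2g - 1$ by Remark~\ref{remark1}; this is part (2). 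Part (3) follows from $\operatorname{rank}(\mu) = (a_1 - b_1) - 2g - 2 \le h - 2g - 2$, and part (5) from $|s_{g+1}(L)| = |L|$ combined with $|d_{f-2g+1}(R)| = |R| + f - 2g$.

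The substantive step is part (1), the verification that $\mu = s_{g+1}(L)\, d_{f-2g+1}(R)$ is a genuine Frobenius symbol. Assuming both pieces are nonempty, the last column of $s_{g+1}(L)$ has top entry $a_{t-1} - g - 1$ and bottom entry $b_{t-1} + g + 1$, while the first column of $d_{f-2g+1}(R)$ has top entry $\beta_1 + f - 2g$ and bottom entry $\gamma \in \{0, 1, \beta_2 + 2\}$. The strict top-row inequality $a_{t-1} - g - 1 > \beta_1 + f - 2g$ is exactly hypothesis (iii). On the bottom row the only nontrivial case is $\gamma = \beta_2 + 2$, where I combine hypothesis (iv) with $\beta_2 \le \beta_1 - 1$ and hypothesis (ii) to obtain $\beta_2 - g + 1 \le \alpha_1 + g - f - 1 < b_{t-1}$, hence $b_{t-1} + g + 1 > \beta_2 + 2$. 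I must also check that $s_{g+1}(L)$ alone is a Frobenius symbol: strict monotonicity of both rows is automatic, and nonnegativity reduces to $a_{t-1} \ge g + 1$, which follows from (iii) once I use the consequence $\beta_1 \ge 2g - f - 1$ of hypothesis (ii) together with $\alpha_1 \ge 0$. The edge cases $L = \emptyset$ and $d_{f-2g+1}(R) = \emptyset$ (the latter forces $R$ to be the single column with top $0$ and bottom $2g - f - 1$) are each handled directly from (iii), (iv), and $2g \ge f + 1$.

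For reversibility in part (4), property (2) makes the decomposition canonical: the break between the $s_{g+1}$-image and the $d_{f-2g+1}$-image of $\mu$ is the unique index where the column rank jumps from $\le f - 2g - 2$ to $\ge f - 2g - 1$. Once $\mu$ is split as $L' R'$ at this index, I recover $L = s_{-(g+1)}(L')$ and $R = d_{f-2g+1}^{-1}(R')$ using Remark~\ref{remark2}, and a direct check restores hypotheses (i)--(iv). The main obstacle, as in Lemma~\ref{lem:cd}, is the delicate bookkeeping of the bottom-row inequality at the join in (1): the chain $\beta_2 \le \beta_1 - 1$, $\beta_1 \ge \alpha_1 + 2g - f - 1$, and $b_{t-1} > \alpha_1 + g - f - 1$ must be combined in exactly the right order, and the normalization $2g \ge f + 1$ enters precisely to guarantee $\beta_1 \ge 0$ and thereby allow $\alpha_1 \ge 0$ to propagate through the argument.
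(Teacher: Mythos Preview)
Your overall strategy matches the paper's proof exactly, and parts (2), (3), (4), (5) are fine. However, your argument for the bottom-row inequality in part (1) has a genuine gap.

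You claim that from $\beta_2 \le \beta_1 - 1$ and hypothesis (ii) one obtains $\beta_2 - g + 1 \le \alpha_1 + g - f - 1$, i.e.\ $\beta_2 \le \alpha_1 + 2g - f - 2$. But (ii) says $\alpha_1 - \beta_1 \le f - 2g + 1$, equivalently $\alpha_1 + 2g - f - 2 \le \beta_1 - 1$. So both $\beta_2$ and $\alpha_1 + 2g - f - 2$ lie below $\beta_1 - 1$, and there is no way to compare them. Concretely, take $f = 0$, $g = 1$, $\alpha_1 = 0$, $\beta_1 = 5$, $\beta_2 = 4$: hypothesis (ii) holds, yet $\beta_2 = 4 > 0 = \alpha_1 + 2g - f - 2$. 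Hypothesis (iv) then only gives $b_{t-1} \ge 1$, hence $b_{t-1} + g + 1 \ge 3$, which does not exceed $\beta_2 + 2 = 6$.

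The paper instead uses hypotheses (i) and (iii) for the bottom row: from (i) one has $b_{t-1} \ge a_{t-1} - f$, and then (iii) gives
\[
b_{t-1} + g + 1 \;\ge\; a_{t-1} - f + g + 1 \;>\; \beta_1 + 2 \;>\; \gamma,
\]
the last inequality holding for each of $\gamma \in \{0, 1, \beta_2 + 2\}$ since $\beta_1 > \beta_2$. Replace your chain with this one and the proof goes through.
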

\begin{proof}
First note that since $rank(R)=\a_1-\b_1\le f-2g+1$, $d_{f-2g+1}(R)$ is well defined.  We now prove each of the five statements. 

(1) 
If $L=\emptyset$, then $\mu=d_{f-2g+1}(R)$ is obviously
a Frobenius symbol since the Dyson map is defined on partitions.
Now assume $L\ne\emptyset$.
Then the last column of  $s_{g+1}(L)$ is  
$\begin{matrix} a_{t-1}-g-1\\ 
b_{t-1}+g+1 \end{matrix}$. 
\begin{itemize}
\item
If $d_{f-2g+1}(R)\ne \emptyset$, then its first column is
$\begin{matrix} \b_1+f-2g \\ \g \end{matrix}$, 
where $\g$ is $0$, $1$, or $\b_{2}+2$ (see Section~\ref{dyson}). 
Since $a_{t-1}>\beta_1-g+f+1$, we have
\begin{equation*}
a_{t-1}-g-1> \b_1+f-2g,
\end{equation*}
from which with $a_{t-1}-b_{t-1}\le f$, it follows that
\begin{equation*}
b_{t-1}+g+1 \ge a_{t-1}-f+g+1>\b_1+2 > \g.
\end{equation*}
Thus $\mu=s_{g+1}(L)\,d_{f-2g+1}(R)$ is a Frobenius symbol.
\item
If $d_{f-2g+1}(R)=\emptyset$,  
then $R=\left(\begin{matrix} 0\\ 2g-f-1 \end{matrix}\right)=\left(\begin{matrix} \a_1\\ \b_1 \end{matrix}\right)$. Since $a_{t-1}>\beta_1-g+f+1$, 
\begin{equation*}
a_{t-1}-g-1 \ge \b_1 - 2g +f +1 = 0.
\end{equation*}
Also, since $b_{t-1}> \a_1+g-f-1$ and $2g\ge f+1$, 
\begin{equation*}
b_{t-1}+g+1 \ge \a_{1}+2g-f +1 \ge 2.
\end{equation*}
Thus
$\mu=s_{g+1}(L)$ is a Frobenius symbol.
\end{itemize}

(2) For $1\le y \le t-1$, since $a_y-b_y\le f$, 
\begin{equation*}
\mu_{1y}-\mu_{2y}=(a_y-b_y)-2g-2\le f-2g-2.
\end{equation*}

Also, by Remark~\ref{remark1}, 
\begin{equation*}
\mu_{1t}-\mu_{2t}=rank(d_{f-2g+1}(R))\ge (f-2g+1)-2.
\end{equation*}

(3) If $rank(L)\le h$, then $a_1-b_1 \le h$. Thus
$$
rank(\mu)=\mu_{11}-\mu_{21}=(a_1-b_1)-2g-2\le h-2g-2.
$$

(4) Consider a Frobenius symbol $\mu=\left( \begin{matrix} \mu_{11} &  \mu_{12} & \cdots & \mu_{1{\d}'}\\ 
\mu_{21} &  \mu_{22} & \cdots & \mu_{2{\d}'}\end{matrix} \right)$
satisfying \eqref{lem2:rk}. 
Set 
$$
L'=\left( \begin{matrix} \mu_{11} & \cdots & \mu_{1(t-1)}\\ 
\mu_{21} & \cdots & \mu_{2(t-1)}\end{matrix} \right) 
\quad\mbox{and}\quad
R'=\left( \begin{matrix} \mu_{1t}  & \cdots & \mu_{1 {\d}'}\\ 
\mu_{2t} & \cdots & \mu_{2{\d}'}\end{matrix} \right). 
$$

Then $ s_{-g-1}(L')$ and $d_{f-2g+1}^{-1}(R')$ are the desired Frobenius symbols. 
So \eqref{lem2:re} holds. 

(5) Finally, since $|s_{g+1}(L)|=|L|$ and $|d_{f-2g+1}(R)|=|R|-(2g-f)$, \eqref{lem2:wt} holds true.
\end{proof}

\subsection{Bijection $\psi_m$} \label{psi}
Assume that $m$ is a positive integer. We now construct a bijection $\psi_m$ between the $(k,i)$-singular overpartitions of $n$  with $m$ dotted blocks and the partitions of $n'$, where $n'=n-k\binom{m}{2}-i m$  if the last dotted block is negative, and $n'=n-k\binom{m+1}{2} +i m$  if the last dotted block is positive. This proves Theorem~\ref{thm3.1}.  By symmetry, it is sufficient to show the case that the last dotted block is negative.

Let $\lambda$ be a $(k,i)$-singular overpartition in which there are exactly $m$ dotted blocks and the last dotted block is negative .

First let $D_1$ be the union of the last dotted block and the blocks on the right of the last dotted block if any.  From the right to left, denote each of the unchosen dotted blocks by $D_v$ for $1<v \le m$. Let $D_{m+1}$ be the union of the blocks on the left of $D_{m}$ if any. 
For example, consider a $(5,2)$-singular overpartition
\begin{equation*}
\l=\left( \begin{array}{cc|cc|ccc|c|cc}
31 & 28 & 27 &  22 & 18  &  9  & 8 &  6  & 1 & 0 
\\ 
30 & 28 & 25 &  20 & 19  &  11 & 8 &  4 & 2 & 0 
\end{array} \right),
\end{equation*}
with its sequence of dotted blocks  $E\dot{P}\dot{N}PN$. Then we have
$$
D_3=\left( \begin{matrix} 31  & 28\\ 30  & 28 \end{matrix}\right),~
D_2=\left( \begin{matrix} 27  & 22\\ 25  & 20 \end{matrix}\right),~
D_1=\left( \begin{matrix} 18  & 9 & 8 & 6 & 1& 0 \\ 19  & 11 &8  & 4& 2& 0
\end{matrix}\right).
$$

We then define $\G_{1},\ldots,\G_{m+1}$ and $\psi_{m}(\l)$ as follows.
\begin{itemize}
\item Set $\G_{1}=D_1$.
\item  For $1\le v \le m$, set
\begin{equation*}
\G_{v+1}=\begin{cases}
c_{i+w k}(D_{v+1})\,d_{1-i-(v-1) k}(\G_{v}) 
& \mbox{if $v=2w+1$ for some $w\ge 0$,}\\
s_{w k}(D_{v+1})\,d_{1-i-(v-1)k}(\G_{v}) 
& \mbox{if $v=2w$ for some $w>0$.}\\
\end{cases}
\end{equation*}
\item Define $\psi_m(\l)=\G_{m+1}$.
\end{itemize}

Now we will inductively show that, for each $1 \le v \le m$, $\G_{v}$ is a partition 
satisfying 
\begin{equation} \label{condition}
rank(\G_{v}) \le 1-i-(v-1)k.
\end{equation}
First, since $\G_1=D_1$ and the first column of $D_1$ is $(k,i)$-negative,  $\G_1$ is a partition satisfying \eqref{condition}. 
Assume that for $1\le v<m$,  $\G_{1},\G_{2},\ldots,\G_{v}$ are well defined and satisfy \eqref{condition}.
Consider $\G_{v+1}$. 
\begin{itemize}
\item[Case 1:] Suppose $v=2w+1$ for some $w\ge0$. 
Then we can write $\G_{v+1}=\G_{2w+2}$ as
$$
\G_{2w+2}=c_{i+wk}(D_{2w+2})\,d_{1-i-2wk}(\G_{2w+1}).
$$
In Lemma~\ref{lem:cd}, set $f=2-i$, $g=wk+1$, $h=k-i-1$, and $L=D_{2w+2}$, $R=\G_{2w+1}$.
Clearly $f,g,h$ satisfy $g\ge 1$, $2g\ge f+1$, $h\ge f$. 

Let us check the four conditions of the lemma.  First, note that $D_{2w+2}$ is not $(k,i)$-negative because the last dotted block is negative and the parity is alternating, so Condition i) holds true.  From the assumption \eqref{condition}, 
$$rank(\G_{2w+1})\le 1-i-2wk,$$ 
so Condition ii) holds true.  
Lastly,  let $\begin{matrix} x_1 \\ x_2  \end{matrix}$ and $\begin{matrix} z_1\\ z_2\end{matrix}$  be the last column of $D_{2w+2}$ and the first column of $D_{2w+1}$, respectively.  Since $D_{2w+2}D_{2w+1}$ forms a Frobenius symbol, we know
\begin{equation*}
x_1>z_1, \quad x_2>z_2. 
\end{equation*}
Also, since $\Gamma_{2w+1}=s_{wk}(D_{2w+1})d_{1-i -(2w-1)k}(\Gamma_{2w})$, the first column of $\Gamma_{2w+1}$ is $\begin{matrix} z_1-wk\\ z_2+wk \end{matrix}$. 
Thus, we have
\begin{align*}
x_1&>(z_1-wk)+ (wk+1)-1,\\
x_2&>(z_2+wk)- (wk+1)+1\ge 0,
\end{align*}
which verify Conditions iii) and iv).  Since all the four conditions in Lemma~\ref{lem:cd} are satisfied, by Statement \eqref{lem1:rank} of Lemma~\ref{lem:cd}, $\G_{2w+2}$ is a Frobenius symbol satisfying \eqref{condition}.

\item[Case 2:] Suppose $v=2w$ for some $w\ge1$. 
Then we can write $\G_{v+1}=\G_{2w+1}$ as
$$
\G_{2w+1}=s_{wk}(D_{2w+1})\,d_{1-i-(2w-1)k}(\G_{2w}).
$$
In Lemma~\ref{lem:sd}, set $f=k-i-2$, $g=wk-1$, $h=1-i$, and $L=D_{2w+1}$, $R=\G_{2w}$.
Clearly, $f,g$ and $h$  satisfy $g\ge 1$, $2g\ge f+1$, $h\le f$.  Note that $D_{2w+1}$ is not $(k,i)$-positive, 
$$
rank(\G_{2w})\le 1-i-(2w-1)k,
$$ 
and $D_{2w+1}D_{2w}$ forms a Frobenius symbol. Thus, in the same way as Case 1, we can see all the four conditions in Lemma~\ref{lem:sd} are satisfied.
Therefore, by Statement \eqref{lem2:rank} of Lemma~\ref{lem:sd},  $\G_{2w+1}$ is a Frobenius symbol satisfying \eqref{condition}.
\end{itemize}

We now have that $\G_m$ is a Frobenius symbol satisfying \eqref{condition} from the induction. Also, $D_{m+1}$ is a Frobenius symbol.  We can easily check that $D_{m+1}$ and $\G_m$ satisfy the conditions for Lemmas~\ref{lem:cd} or \ref{lem:sd}.  Thus,  $\G_{m+1}$ is a Frobenius symbol by Statement~\eqref{lem1:wd} of each lemma.


Let us then check the weight difference. By Statement \eqref{lem2:wt} of each of Lemmas~\ref{lem:cd} and \ref{lem:sd}, we have 
\begin{equation} \label{condition1}
|D_{v+1}\G_{v}|-|\G_{v+1}|=i+(v-1)k
\end{equation}
for $v=1,\ldots, m$.
By \eqref{condition1}, we have
\begin{align*}
|\l|
&=|D_{m+1} D_m \cdots D_5D_{4} D_{3} D_{2} D_{1} | \\
&=|D_{m+1} D_m \cdots D_5D_{4} D_{3} D_{2} \G_1| \\
&=|D_{m+1} D_m \cdots D_5D_{4} D_{3} \G_2|+i \\
&=|D_{m+1} D_m \cdots D_5D_{4} \G_3|+(i+k)+i \\
&=|D_{m+1} D_m \cdots D_5 \G_4 |+ (i+2k)+ (i+k)+i \\
& \;\; \vdots \\
&=|{\G}_{m+1}|+\sum_{v=1}^{m} \big( i +k  (v-1) \big).
\end{align*}
Thus 
\begin{equation*}
|{\G}_{m+1}| = n-k\binom{m}{2}- i m.
\end{equation*}

By Statement \eqref{lem1:re} of Lemma \ref{lem:cd}  and Lemma \ref{lem:sd},
each process of producing $\G_{v+1}$ is reversible. 
Therefore, $\psi_m$ is indeed a bijection. The inverse map will be given after the following example. 

\begin{example} \label{ex1}
Consider a $(5,2)$-singular overpartition
\begin{equation*}
\l=\left( \begin{array}{cc|ccc|ccccccc|cc|cc}
31 & 28 & 27 & 25 & 22 & 18 & 16 & 14 & 13 & 9  & 8 & 7 & 6 & 4 & 1 & 0 
\\ 
30 & 28 & 25 & 24 & 20 & 19 & 16 & 15 & 12 & 11 & 8 & 7 & 4 & 3 & 2 & 0 
\end{array} \right),
\end{equation*}
with its sequence of dotted blocks $EP\dot{N}\dot{P}\dot{N}$. Note that $k=5$, $i=2$, 
and $m=3$.  We have the following $\G_v$ for $v=1,2,3,4$:
\begin{itemize}
\item $\G_1 = D_1 = 
\left( \begin{array}{cc}
 1 & 0 
\\ 
 2 & 0 
\end{array} \right)$,
\item $\G_2 = c_2(D_2)\,d_{-1}(\G_1) =
\left( \begin{array}{cc:c}
 2 & 1 & 0 
\\ 
 8 & 6 & 2 
\end{array} \right)$,
\item $\G_3 = s_5(D_3)\,d_{-6}(\G_2) =
\left( \begin{array}{ccccccc:cc}
13 & 11 &  9 &  8 &  4 &  3 &  2 & 1 & 0
\\ 
24 & 21 & 20 & 17 & 15 & 13 & 12 & 8 & 4
\end{array} \right)$,
\item ${\G}_4 = c_7(D_4)\,d_{-11}(\G_3) =
\left( \begin{array}{ccccc:cccccccc}
23 & 21 & 18 & 17 &  13 &  12 &  11 &  9 &  7 & 6 & 2 & 1 & 0
\\ 
38 & 35 & 34 & 32 & 30 & 23 & 22 & 19 & 17 & 15 & 14 & 10 & 6
\end{array} \right)$,
\end{itemize}
where the dotted lines are put to separate the concatenated two arrays in each $\G_v$. 
Here ${\G}_4$ is the ordinary partition 
corresponding to the $(5,2)$-singular overpartition $\l$.
\end{example}

We now briefly describe the inverse of $\psi_m$. 
Let $\mu$ be an ordinary partition. 
\begin{itemize}
\item Set $\mu=\G_{m+1}$. 
\item  For $v=m,\ldots, 1$, let  $\begin{matrix} a_t \\ b_t \end{matrix}$ be the first column of $\G_{v+1}$ such that $a_t-b_t \ge -(v-1)k-i-1$.  If there exists no such $t$, then we define $t$ to be $1+ \ell(\G_{v+1})$, where $\ell(\G_{v+1})$ denotes the number of columns of $\G_{v+1}$. 
Split $\G_{v+1}$ into two arrays $L_v$ and $R_v$ by choosing the first $t-1$ columns for $L_v$ and the rest for $R_v$. 
Set 
$$D_{v+1}=\begin{cases}c_{i+wk}(L_v) & \mbox{if $v=2w+1$ for some $w\ge 0$,}\\
s_{-wk}(L_v) & \mbox{if $v=2w$ for some $w>0$,}\end{cases}$$ and 
$\G_v=d_{1-i-(v-1)k}^{-1}(R_v)$. 

\item Define $\psi_m^{-1}(\mu)=D_{m+1}\cdots D_2D_1.$
\end{itemize}

In the following example, we present how $\psi^{-1}_m$ works with the partition obtained in Example~\ref{ex1}. 
\begin{example} 
Let 
$$\mu =
\left( \begin{array}{ccccccccccccc}
23 & 21 & 18 & 17 &  13 &  12 &  11 &  9 &  7 & 6 & 2 & 1 & 0
\\ 
38 & 35 & 34 & 32 & 30 & 23 & 22 & 19 & 17 & 15 & 14 & 10 & 6
\end{array} \right).$$
Note that  we know that $k=5$, $i=2$, and $m=3$. 
Below a dotted line is used to separate the two arrays in each step. Also,  to the right of $\G_v$,  $a_t-b_t$ is given to indicate $t$, and to the right of $D_{v+1}\G_{v}$, two maps that are applied to get $D_{v+1}$ and $\G_v$ are given. 
\begin{itemize}
\item 
$
\G_4=L_4R_4=
\left( \begin{array}{ccccc:cccccccc}
23 & 21 & 18 & 17 &  13 &  12 &  11 &  9 &  7 & 6 & 2 & 1 & 0
\\ 
38 & 35 & 34 & 32 & 30 & 23 & 22 & 19 & 17 & 15 & 14 & 10 & 6
\end{array} \right)$, $a_6-b_6 \ge -13$, 



\item $
D_4 \G_3=
\left( \begin{array}{ccccc:ccccccccc}
31 & 28 & 27 & 25 &  23 &  13 &  11 &  9 &  8 & 4 & 3 & 2 & 1 & 0
\\ 
30 & 28 & 25 & 24 & 20 & 24 & 21 & 20 & 17 & 15 & 13 & 12 & 8 & 4
\end{array} \right)$,\quad$c_{7}, \,  d_{-11}^{-1} $
\item $
\G_3=L_3R_3=
\left( \begin{array}{ccccccc:cc}
 13 &  11 &  9 &  8 & 4 & 3 & 2 & 1 & 0
\\ 
 24 & 21 & 20 & 17 & 15 & 13 & 12 & 8 & 4
\end{array} \right)$,\quad$a_8-b_8 \ge -8$,
\item $
D_3 \G_2=
\left( \begin{array}{ccccccc:ccc}
 18 & 16 & 14 & 13 & 9 & 8 & 7 & 2 & 1 & 0
\\ 
 19 & 16 & 15 & 12 & 10 & 8 & 7 & 8 & 6 & 2
\end{array} \right)$,\quad$s_{-5}, \, d_{-7}^{-1}$,
\item $
\G_2=L_2R_2=
\left( \begin{array}{cc:c}
 2 & 1 & 0
\\ 
 8 & 6 & 2
\end{array} \right)$,\quad$a_3-b_3 \ge -3$,
\item $
D_2\G_1=
\left( \begin{array}{cc:cc}
 6 & 4 & 1 & 0
\\ 
 4 & 3 & 2 & 0
\end{array} \right)$,\quad$c_{2}, \,d_{-1}^{-1}$
\item $
D_1 =\G_1=
\left( \begin{array}{cc}
 1 & 0
\\ 
 2 & 0
\end{array} \right)$.

\end{itemize}
Thus we recover the singular overpartition $\lambda$ in Example~\ref{ex1}:
$$
\l=
\left( \begin{array}{cccccccccccccccc}
31 & 28 & 27 & 25 & 23 & 18 & 16 & 14 & 13 & 9 & 8 & 7 & 6 & 4 & 1 & 0
\\ 
30 & 28 & 25 & 24 & 20 & 19 & 16 & 15 & 12 & 10 & 8 & 7 & 4 & 3 & 2 & 0
\end{array} \right),
$$
where
\begin{gather*}
D_4=\left( \begin{matrix}31 & 28 & 27 & 25 & 23\\ 
30 & 28 & 25 & 24 & 20 \end{matrix}\right),~
D_3=\left( \begin{matrix} 18 & 16 & 14 & 13 & 9 & 8 & 7\\ 
19 & 16 & 15 & 12 & 10 & 8 & 7 \end{matrix}\right),
D_2=\left( \begin{matrix} 6  & 4 \\ 4  & 3 \end{matrix}\right),~
D_1=\left( \begin{matrix} 1  & 0 \\ 2  & 0 \end{matrix}\right).
\end{gather*}
\end{example}

Let us give another (simple but non-trivial) example.
\begin{example}
Consider a partition 
$$\mu =
\left( \begin{array}{c}
0
\\ 
0
\end{array} \right).$$
Note that we have information $m=2$, but arbitrary $k$ and $i$. Then,
\begin{itemize}
\item $\G_3=L_3R_3=
\left( \begin{array}{c:c}
& 0
\\ 
& 0
\end{array} \right)$,\quad$a_1-b_1=0 \ge -i-k-1$, so $t=1$,

\item $D_3\G_2=
\left( \begin{array}{c: c}
& 0
\\ 
& i+k
\end{array} \right)$,\quad$s_{-k}, \,d_{1-i-k}^{-1},$

\item $\G_2=L_2R_2=
\left( \begin{array}{c:c}
0 &
\\ 
i+k &
\end{array} \right)$,\quad$a_1-b_1=-i-k <-i-1$, so $t=2$,

\item $
D_2 \G_1=
\left( \begin{array}{c:c}
 k&0
\\ 
 i &i-1
\end{array} \right)$,\quad$c_{i},\,  d_{1-i}^{-1}$,
\item $
D_1= \G_1=
\left( \begin{array}{c}
 0
\\ 
 i-1
\end{array} \right)$.
\end{itemize}
Thus we can restore 
$$
\l=
\left( \begin{matrix}
 k&0
\\ 
i &i-1
\end{matrix} \right),
$$
where
\begin{gather*}
D_3=\emptyset,~D_2=\left( \begin{matrix} k \\ i \end{matrix}\right),~
D_1=\left( \begin{matrix}  0 \\ i-1 \end{matrix}\right).
\end{gather*} 
\end{example}

\section{The question of Andrews}\label{sec4}

Let $\overline{C}_{k,i}(n)$ be the number of overpartitions of $n$ in which no parts are multiples of $k$ and only parts congruent to $\pm i $ mod $k$ can be overlined. 
Theorem~\ref{thm1} says
\begin{equation*}
\overline{Q}_{k,i}(n)=\overline{C}_{k,i}(n). 
\end{equation*}

For any integer $m$, we let $\overline{C}_{k,i}(n,m)$ be the number of overpartitions counted by $\overline{C}_{k,i}(n)$ such that the number of overlined parts congruent to $ i$ mod $k$ minus the number of overlined parts congruent to $-i$ mod $k$  equals $m$. 

Also, for $m\ge 0$, let $\overline{Q}_{k,i}(n,m)$ be the number of $(k,i)$-singular overpartitions of $n$ with exactly $m$ dotted blocks and the last dotted block negative. For $m<0$,   let $\overline{Q}_{k,i}(n,m)$ be the number of $(k,i)$-singular overpartitions of $n$ with exactly $|m|$ dotted blocks and the last dotted block positive.  

We will prove the following refined version of Theorem~\ref{thm1}:
\begin{theorem} \label{thm4.1}
For any integer $m$, 
\begin{equation*}
\overline{Q}_{k,i}(n,m)=\overline{C}_{k,i}(n,m).
\end{equation*}
\end{theorem}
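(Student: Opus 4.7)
The plan is to show both sides equal $p\!\left(n - k\binom{m}{2} - im\right)$, with $\binom{m}{2}$ read as $m(m-1)/2$ for every integer $m$. The singular-overpartition side is handled immediately by Theorem~\ref{thm3.1} together with the symmetric extension recorded in the proof of the Corollary: $m=0$ falls under condition S1 and counts ordinary partitions of $n$; $m>0$ is exactly part (1) of Theorem~\ref{thm3.1}; and $m<0$ is part (2) with $|m|$ in place of $m$, since a direct substitution gives $n-k\binom{|m|+1}{2}+i|m| = n-k\binom{m}{2}-im$.

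For the overpartition side, I would split an overpartition $\lambda$ counted by $\overline{C}_{k,i}(n,m)$ into three pieces: the non-overlined parts $\tau$ (an ordinary partition whose parts avoid all multiples of $k$); the overlined parts $\mu^1$ with parts $\equiv i \pmod{k}$; and the overlined parts $\mu^2$ with parts $\equiv -i \pmod{k}$. Both $\mu^1$ and $\mu^2$ have distinct parts, and by definition of the refinement $\ell(\mu^1) - \ell(\mu^2) = m$. Apply the Wright map $\varphi$ of Section~\ref{wright} to $(\mu^1, \mu^2)$, using the $m \ge 0$ branch or the conjugate $m < 0$ branch as appropriate; by Remark~\ref{remark3} this produces an arbitrary ordinary partition $\pi^\ast$ of $\bigl(|\mu^1|+|\mu^2|-k\binom{m}{2}-im\bigr)/k$, whence $k\pi^\ast$ is an arbitrary partition all of whose parts are multiples of $k$. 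Since every ordinary partition decomposes uniquely as its multiples-of-$k$ part and its non-multiples-of-$k$ part, the union $\pi := \tau \cup k\pi^\ast$ ranges bijectively over all ordinary partitions of $n - k\binom{m}{2} - im$. Hence $\overline{C}_{k,i}(n,m) = p(n - k\binom{m}{2} - im)$.

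Combining the two yields the refined identity $\overline{Q}_{k,i}(n,m)=\overline{C}_{k,i}(n,m)$, and summing over $m$ recovers Andrews' Theorem~\ref{thm1}. Moreover, composing the decomposition $\lambda \mapsto (\tau,\mu^1,\mu^2) \mapsto (\tau, k\pi^\ast, m) \mapsto \pi$ with the inverse $\psi_m^{-1}$ of Section~\ref{psi} (and its symmetric counterpart for $m<0$) produces an explicit bijection from the overpartitions counted by $\overline{C}_{k,i}(n,m)$ onto the $(k,i)$-singular overpartitions counted by $\overline{Q}_{k,i}(n,m)$, thereby answering Andrews' first open question. The technical core of the argument — the Dyson-based construction of $\psi_m$ and the distinction of branches in the Wright map — is already in place, so the only point requiring care is the uniform handling of the sign of $m$, accomplished by the parity-driven case splits built into both $\varphi$ and $\psi_m$.
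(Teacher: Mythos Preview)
Your proposal is correct and follows essentially the same route as the paper: both arguments show each side equals $p\bigl(n-k\binom{m}{2}-im\bigr)$ by invoking Theorem~\ref{thm3.1} for $\overline{Q}_{k,i}(n,m)$ and by splitting an overpartition into its non-overlined part and the pair $(\mu^1,\mu^2)$ of overlined residue classes, then applying the Wright map $\varphi$ and re-gluing. Your write-up is in fact slightly more explicit than the paper's in handling the sign of $m$ and in spelling out why the union $\tau\cup k\pi^\ast$ gives a bijection onto all ordinary partitions.
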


\proof
Let $\pi$ be an overpartition counted by $\overline{C}_{k,i}(n)$.  We first divide the parts of $\pi$ into three partitions $\mu^1, \mu^2, \gamma$ as follows: $\mu^1$ is the partition consisting of the overlined parts of $\pi$ that are congruent to $i$ mod $k$, $\mu^2$ is the partition consisting of the overlined parts of $\pi$ that are congruent to $-i$ mod $k$, and $\gamma$ is the partition consisting of the nonoverlined parts of $\pi$. Clearly, $\pi=\mu^1\cup \mu^2 \cup \gamma$ and $\ell(\mu^1)-\ell(\mu^2)=m$.

Recall the Wright map $\varphi$ from Section~\ref{wright}. Let $(\kappa,m)=\varphi(\mu^1, \mu^2)$. Then $\kappa$ is a partition into multiples of $k$. Clearly, $ \kappa \cup \gamma$ is  a partition of $n-k\binom{m}{2}-i m$. Thus we have
\begin{equation*}
\overline{C}_{k,i}(n,m)=p\bigg(n-k\binom{m}{2}-i m\bigg),
\end{equation*}
which with Theorem~\ref{thm3.1} completes the proof. 
\endproof


%

\begin{remark}
Since only $\varphi$ and $\psi_m$ are used, the proof above is bijective, which answers to the question of Andrews.  
\end{remark}

Lastly, we illustrate how to combine $\varphi$ and $\psi_m$ to relate a $(k,i)$-singular overpartition of $n$ to an overpartition counted by $\overline{C}_{k,i}(n)$ in  an example.
\begin{example}
Let $\lambda$ be the  $(5,2)$-singular overpartition of  $469$ given in Example~\ref{ex1}, i.e, 
\begin{equation*}
\l=\left( \begin{array}{cc|ccc|ccccccc|cc|cc}
31 & 28 & 27 & 25 & 22 & 18 & 16 & 14 & 13 & 9  & 8 & 7 & {6} & 4 & 1 & 0 
\\ 
30 & 28 & 25 & 24 & 20 & 19 & 16 & 15 & 12 & 11 & 8 & 7 & {4} 
& 3 & {2} & 0 
\end{array} \right),
\end{equation*}  
with its sequence of dotted blocks is $E{P}\dot{N}\dot{P}\dot{N}$. Then we saw that  $m=3$ and 
\begin{align*}
\psi_3(\l) =\mu &=
\left( \begin{array}{ccccccccccccc}
23 & 21 & 18 & 17 &  13 &  12 &  11 &  9 &  7 & 6 & 2 & 1 & 0
\\ 
38 & 35 & 34 & 32 & 30 & 23 & 22 & 19 & 17 & 15 & 14 & 10 & 6
\end{array} \right)
\\
&=(24, 23, 21^2, 18^3, 17, 16^2, 13^9, 12^3, 11^3, 9, 8, 7^2, 5^6, 4, 3, 1^2), 
\end{align*}
where the power of a number indicates the number of occurrences of the number as a part. 
We now take the multiples of $5$ in $\mu$ to form the partition $(5,5,5,5,5,5)$. Then
\begin{equation*}
\varphi^{-1}((5,5,5,5,5,5),3)=((17, 12, 7, 2), ( 13 ))
\end{equation*}
Thus, we obtain the following overpartition $\pi$ counted by $\overline{C}_{5,2}(469)$:
\begin{align*}
\pi= (24, 23, 21^2, 18^3, \overline{17}, 17, 16^2, 13^9, \overline{13}, \overline{12}, 12^3, 11^3, 9, 8, \overline{7}, 7^2, 4, 3, \overline{2}, 1^2). 
\end{align*}
\end{example}

%
%

\end{document}